\newtheorem{theorem}{Theorem}[section]
\newtheorem{definition}[theorem]{Definition}
\newtheorem{lemma}[theorem]{Lemma}
\newtheorem{prop}[theorem]{Proposition}
\numberwithin{equation}{section}
\def\u{\underline}
\begin{document}

\baselineskip 20pt

\begin{center}

\textbf{\Large A random attractor for stochastic porous media
equations on infinite lattices}

\vskip 0.5cm

{\large  Anhui Gu, Yangrong Li, Jia Li}

\vskip 0.3cm

\textit{School of Mathematics and Statistics, Southwest
University, Chongqing, 400715, China}\\

\vskip 1cm

\begin{minipage}[c]{15cm}

\noindent \textbf{Abstract}: The paper is devoted to studying the
existence of a random attractor for stochastic porous media
equations on infinite lattices under some conditions. \vspace{5pt}

\vspace{5pt}

\textit{Keywords}:  Random dynamical system; stochastic porous media
lattice equations; random attractor

\end{minipage}
\end{center}

\vspace{10pt}

\baselineskip 18pt

\section{Introduction}\label{s1}
In this paper, we study the following stochastic porous media
lattice equations perturbed by a multiplicative noise:
\begin{eqnarray} \label{sys1}
\begin{split}
\frac{du_i(t)}{dt}&=\Phi(u_{i-1})-2\Phi(u_i)+\Phi(u_{i+1})\\
&\quad\quad -\lambda
(u_i+|u_i|^{p-1}u_i)+g_i+\alpha_iu_i\circ\frac{dw(t)}{dt}
\end{split}
\end{eqnarray}
with the initial data
\begin{eqnarray} \label{initial}
\begin{split}
u_i(0)=u_{0,i}, \quad i\in \mathbb{Z},
\end{split}
\end{eqnarray}
where $\mathbb{Z}$ denotes the integer set, $u_i\in \mathbb{R}$,
$\lambda$ and $\alpha_i\in \mathbb{R}$ are positive constants,
$p>1$, $\Phi$ satisfies certain dissipative conditions, $g_i\in
\mathbb{R}$, $w(t)$ is a Brownian motion (Wiener process) and
$\circ$ denotes the Stratonovich sense of the stochastic term.

System \eqref{sys1} can be regarded as the spatial discrete form on
1$\mathbf{D}$ infinite lattices of a type of stochastic porous media
equations
\begin{align} \label{system2}
u_t=&\Delta(\Phi(u))-\lambda (u+|u|^{p-1}u)+g(x)+\alpha u\circ
\frac{dw(t)}{dt}, \quad x\in \mathbb{R}.
\end{align}
When $\lambda=0$, the (similar) stochastic porous media equations
have been intensively investigated in recent years, see e.g.
\cite{BDR1, BDR2, BDR3, DRRW, Kim, RW} and references therein. The
long-time behavior of stochastic porous media equations with
additive white noise in terms of the existence of a random attractor
has first been established in \cite{BGLR} which then has been
extended to more generally distributed additive noise in
\cite{Gess1, Gess2}, and linear multiplicative noise in space and
time in \cite{Gess4}.

Recently, lattice dynamical systems, which can be considered as the
\textit{spatial discrete} of some PDEs, have drawn much attention
from mathematicians and physicists, due to the wide range of
applications in various areas (see \cite{Chow}). For existence and
properties of different attractors for autonomous deterministic
lattice dynamical systems, see e.g. \cite{BLW, KY, Wang1, Zh4, Zh5}
and e.g. \cite{Wang2, ZH} for non-autonomous deterministic cases.
For stochastic ones, stochastic lattice dynamical systems (SLDS)
arise naturally while random influences or uncertainties are taken
into account, these noises may play an important role as intrinsic
phenomena rather than just compensation of defects in deterministic
models. Since Bates et al. \cite{BLL} initiated the study of SLDS, a
lot of work has been done regarding the existence of global random
attractors for SLDS with white noise in regular or weighted spaces
of infinite sequences, see e.g. \cite{CL, CMV, HSZ}. For lattice
dynamical systems perturbed by ``rough" noises, see \cite{Gu1, Gu2,
Gu3} for more details.

Notice that there are amounts of work considered for random
attractors for PDEs defined on \textit{unbounded domains} (see e.g.
\cite{BLW2, BL, Wang3, Wang4}). This introduces a major obstacle
that Sobolev embeddings are not compact for these cases. In fact,
the study of lattice differential systems generated by some
\textit{spatially discrete} PDEs can be regarded as the cases
considered on \textit{unbounded domains}. We take the
square-summable infinite sequences space $\ell^2$ as the phase
space, so that there is not any embedding relationships to tackle
these questions. Often, lattice models are used more in a physical
and numerical treatment in porous media (see e.g. \cite{JZ, KLJ,
SP}). There are few detailed analysis in the sense of infinite
dynamical systems. Furthermore, we can see that all the references
on the asymptotic behavior of stochastic porous media equations
studied above are restricted to the \textit{bounded domains}. To our
knowledge, there is no result in the case of \textit{unbounded
domains}. Here, we set up the stochastic porous media lattice
equations, and give the existence of a random attractor for the
lattice model, which can be seen as a first attempt to the
\textit{unbounded} cases.

The paper is organized as follows. In next section, we recall some
preliminaries on random dynamical systems and random attractors. In
section \ref{s3}, we formulate the model of stochastic porous media
lattice equations and give a unique solution to system \eqref{sys1},
which generates a continuous random dynamical system. We obtain the
main result, that is the existence of a random attractor, in section
\ref{s4}.

In the sequel, we denote $\ell^p$ the space of $p$-times summable
infinite sequences with norm $\|\cdot\|_p$, especially when $p=2$,
we denote $\ell^2=(\ell^2, (\cdot, \cdot), \|\cdot\|)$.

\section{Preliminaries}\label{s2}

For the reader's convenience, we first introduce some basic concepts
related to random dynamical systems and random attractors, which are
taken from \cite{Arnold},  \cite{Chueshov} and \cite{HSZ}. Let
$(\mathbb{H}, \|\cdot\|_{\mathbb{H}})$ be a separable Hilbert space
and $(\Omega, \mathcal{F}, \mathbb{P})$ be a probability space.

\begin{definition}
A stochastic process $\{\varphi(t, \omega)\}_{t\geq 0, \omega\in
\Omega}$ is a continuous random dynamical system (RDS) over
$(\Omega, \mathcal{F}, \mathbb{P},(\theta_{t})_{t\in \mathbb{R}})$
if $\varphi$ is $(\mathcal{B}[0,\infty)\times \mathcal{F}\times
\mathcal{B}(\mathbb{H}), \mathcal{B}(\mathbb{H}))$-measurable, and
for all $\omega \in \Omega$,

(i) the mapping $\varphi(t,\omega): \mathbb{H}\mapsto \mathbb{H}$,
$x\mapsto \varphi(t,\omega)x$ is continuous for every $t\geq 0$,

(ii)  $\varphi(0,\omega)$ is the identity on $\mathbb{H}$,

(iii) \mbox{(cocycle property)} \
$\varphi(s+t,\omega)=\varphi(t,\theta_{s}\omega)\varphi(s,\omega)$
for all $s, t\geq 0$.
\end{definition}

\begin{definition}  \label{tempered random set}
(i) A set-valued mapping $\omega\mapsto B(\omega)\subset \mathbb{H}$
(we may write it as $B(\omega)$ for short) is said to be a random
set if the mapping $\omega\mapsto$ dist$_{\mathbb{H}}(x, B(\omega))$
is measurable for any $x\in \mathbb{H}$, where dist$_{\mathbb{H}}(x,
D)$ is the distance in $\mathbb{H}$ between the element $x$ and the
set $D\subset \mathbb{H}$.

(ii) A random set $B(\omega)$ is said to be bounded if there exist
$x_0\in \mathbb{H}$ and a random variable $r(\omega)>0$ such that
$B(\omega)\subset\{x\in \mathbb{H}: \|x-x_0\|_{\mathbb{H}}\leq
r(\omega), x_0\in \mathbb{H}\}$ for all $\omega \in \Omega$.

(iii) A random set $B(\omega)$ is called a compact random set if
$B(\omega)$ is compact for all $\omega \in \Omega$.

(iv) A random bounded set $B(\omega) \subset \mathbb{H}$ is called
tempered with respect to $(\theta_{t})_{t\in \mathbb{R}}$ if for
a.e. $\omega \in \Omega$, \ $\lim_{t\rightarrow +\infty}e^{-\gamma
t}d(B(\theta_{-t}\omega))=0 \ \ \mbox{for all} \ \ \gamma > 0$,
where $d(B)=\sup_{x\in B}\|x\|_{\mathbb{H}}$. A random variable
$\omega \mapsto r(\omega)\in \mathbb{R}$ is said to be tempered with
respect to $(\theta_{t})_{t\in \mathbb{R}}$ if for a.e. $\omega \in
\Omega$, $\lim_{t\rightarrow +\infty} \sup_{t\in
\mathbb{R}}e^{-\gamma t}r(\theta_{-t}\omega)=0 \ \ \mbox{for all} \
\ \gamma > 0$.

\end{definition}

We consider a RDS $\{\varphi(t, \omega)\}_{t\geq 0, \omega\in
\Omega}$ over  $(\Omega, \mathcal{F}, \mathbb{P},(\theta_{t})_{t\in
\mathbb{R}})$ and $\mathcal{D}(\mathbb{H})$ the set of all tempered
random sets of $\mathbb{H}$.

\begin{definition}
A random set $\mathcal{K}$ is called an absorbing set in
$\mathcal{D}(\mathbb{H})$ if for all $B\in \mathcal{D}(\mathbb{H})$
and a.e. $\omega \in \Omega$ there exists $t_{B}(\omega)>0$ such
that
$$\varphi(t,\theta_{-t}\omega)B(\theta_{-t}\omega)\subset
\mathcal{K}(\omega) \ \ \mbox{for all} \ \ t\geq t_{B}(\omega).$$

\end{definition}

\begin{definition}
A random set $\mathcal{A}$ is called a global random
$\mathcal{D}(\mathbb{H})$ attractor (pullback
$\mathcal{D}(\mathbb{H})$ attractor) for $\{\varphi(t,
\omega)\}_{t\geq 0, \omega\in \Omega}$ if the following hold:

(i) $\mathcal{A}$ is a random compact set, i.e. $\omega\mapsto d(x,
\mathcal{A}(\omega))$ is measurable for every $x\in \mathbb{H}$ and
$\mathcal{A}(\omega)$ is compact for a.e. $\omega \in \Omega$;

(ii)  $\mathcal{A}$ is strictly invariant, i.e. for $\omega \in
\Omega$ and all $t\geq 0$,
$\varphi(t,\omega)\mathcal{A}(\omega)=\mathcal{A}(\theta_{t}\omega)$;

(iii)  $\mathcal{A}$ attracts all sets in $\mathcal{D}(\mathbb{H})$,
i.e. for all $B\in \mathcal{D}(\mathbb{H})$ and a.e. $\omega \in
\Omega$,
$$\lim_{t\rightarrow+\infty}dist(\varphi(t,\theta_{-t}\omega)
B(\theta_{-t}\omega), \mathcal{A}(\omega))=0,$$ where
$dist(X,Y)=\sup_{x\in X} \inf_{y\in Y}\|x-y\|_{\mathbb{H}}$ is the
Hausdorff semi-metric ($X\subseteq \mathbb{H}, Y\subseteq
\mathbb{H}$).\end{definition}

\begin{prop}(See \cite{BLL}.) \label{attractor1}
Let $K\in \mathcal{D}(\mathbb{H})$ be an absorbing set for the
continuous RDS $\{\varphi(t, \omega)\}_{t\geq 0, \omega\in \Omega}$
which is closed and which satisfies for a.e. $\omega\in \Omega$ the
following asymptotic compactness condition: each sequence $x_n\in
\varphi(t_n, \theta_{-t_n}, K(\theta_{-t_n}\omega)$ with
$t_n\rightarrow\infty$  has a convergent subsequence in
$\mathbb{H}$. Then the cocycle $\varphi$ has a unique global random
attractor
\begin{eqnarray*}
\mathcal{A}(\omega)=\bigcap_{\tau\geq T_K(\omega)}
\overline{\bigcup_{t\geq \tau} \varphi(t,\theta_{-t}\omega,
K(\theta_{-t}\omega))}.
\end{eqnarray*}
\end{prop}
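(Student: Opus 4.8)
\medskip
\noindent\textbf{Proof strategy.} The plan is to show that the set $\mathcal{A}(\omega)$ given by the displayed formula satisfies properties (i)--(iii) in the definition of a random attractor and that any such attractor is unique; throughout write $D_\tau(\omega):=\bigcup_{t\geq\tau}\varphi(t,\theta_{-t}\omega)K(\theta_{-t}\omega)$, so $\mathcal{A}(\omega)=\bigcap_{\tau\geq T_K(\omega)}\overline{D_\tau(\omega)}$. The device that drives everything is the \emph{omega-limit} description of $\mathcal{A}(\omega)$, which I would establish first, straight from this formula: $y\in\mathcal{A}(\omega)$ if and only if there are $t_n\to\infty$ and $x_n\in K(\theta_{-t_n}\omega)$ with $\varphi(t_n,\theta_{-t_n}\omega)x_n\to y$ in $\mathbb{H}$. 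The ``if'' direction is immediate since $\varphi(t_n,\theta_{-t_n}\omega)x_n\in D_\tau(\omega)$ once $t_n\geq\tau$; the ``only if'' direction uses a diagonal extraction together with the asymptotic compactness hypothesis.

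First I would check that $\mathcal{A}(\omega)$ is nonempty and compact for a.e. $\omega$. The sets $\overline{D_\tau(\omega)}$ are closed and, since $K$ is absorbing, nonincreasing in $\tau$; choosing $\tau_n\to\infty$ and $y_n\in D_{\tau_n}(\omega)$, each $y_n=\varphi(s_n,\theta_{-s_n}\omega)\kappa_n$ with $s_n\geq\tau_n\to\infty$ and $\kappa_n\in K(\theta_{-s_n}\omega)$, so asymptotic compactness gives a convergent subsequence $y_{n_k}\to y$, and $y\in\overline{D_\tau(\omega)}$ for every $\tau$; hence $\mathcal{A}(\omega)\neq\emptyset$. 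For compactness, given $w_j\in\mathcal{A}(\omega)$, pick $z_j\in D_j(\omega)$ with $\|w_j-z_j\|_{\mathbb{H}}<1/j$; since $z_j=\varphi(t_j,\theta_{-t_j}\omega)\kappa_j$ with $t_j\geq j\to\infty$, a subsequence of $(z_j)$, and with it of $(w_j)$, converges, necessarily to a point of the closed set $\mathcal{A}(\omega)$.

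Next, the attraction property (iii). I would first show $\mathcal{A}$ attracts $K$ itself: otherwise there are $\varepsilon>0$, $t_n\to\infty$, $x_n\in K(\theta_{-t_n}\omega)$ with $dist(\varphi(t_n,\theta_{-t_n}\omega)x_n,\mathcal{A}(\omega))\geq\varepsilon$, yet asymptotic compactness yields a subsequence of $\varphi(t_n,\theta_{-t_n}\omega)x_n$ converging to some $y$, which lies in $\mathcal{A}(\omega)$ by the characterization --- a contradiction. For a general $B\in\mathcal{D}(\mathbb{H})$, fix $s>0$ and use the cocycle identity to write $\varphi(t,\theta_{-t}\omega)=\varphi(s,\theta_{-s}\omega)\varphi(t-s,\theta_{-t}\omega)$; absorption of $B$ into $K$ at base point $\theta_{-s}\omega$ then gives $\varphi(t,\theta_{-t}\omega)B(\theta_{-t}\omega)\subset\varphi(s,\theta_{-s}\omega)K(\theta_{-s}\omega)$ for all $t\geq s+t_B(\theta_{-s}\omega)$, and letting $s\to\infty$ and invoking the attraction of $K$ closes (iii). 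For strict invariance (ii): if $y=\lim\varphi(t_n,\theta_{-t_n}\omega)x_n\in\mathcal{A}(\omega)$, then by continuity of $\varphi(t,\omega)$ and the cocycle identity $\varphi(t,\omega)\varphi(t_n,\theta_{-t_n}\omega)=\varphi(t+t_n,\theta_{-(t+t_n)}(\theta_t\omega))$ one gets $\varphi(t,\omega)y\in\mathcal{A}(\theta_t\omega)$; conversely, for $z=\lim\varphi(s_n,\theta_{-s_n}(\theta_t\omega))x_n\in\mathcal{A}(\theta_t\omega)$ with $s_n\geq t$, the splitting $\varphi(s_n,\theta_{-s_n}(\theta_t\omega))=\varphi(t,\omega)\varphi(s_n-t,\theta_{-(s_n-t)}\omega)$, asymptotic compactness of the inner sequence (limit $y\in\mathcal{A}(\omega)$) and continuity of $\varphi(t,\omega)$ give $z=\varphi(t,\omega)y$.

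Finally, measurability and uniqueness. Since $\tau\mapsto D_\tau(\omega)$ is nonincreasing, $\mathcal{A}(\omega)=\bigcap_{n\in\mathbb{N}}\overline{D_n(\omega)}$ is a \emph{countable} intersection of nested sets, so by the compactness of $\mathcal{A}(\omega)$ one has $d(x,\mathcal{A}(\omega))=\sup_n d(x,\overline{D_n(\omega)})$; and, using a Castaing representation $K(\omega)=\overline{\{v_k(\omega):k\in\mathbb{N}\}}$ by measurable selections, continuity of each $\varphi(t,\omega)$ in the space variable, and continuity of $\varphi$ in $t$, one obtains $d(x,\overline{D_n(\omega)})=\inf_{t\in\mathbb{Q},\,t\geq n}\inf_k\|x-\varphi(t,\theta_{-t}\omega)v_k(\theta_{-t}\omega)\|_{\mathbb{H}}$, which is measurable because $\varphi$ and $\theta$ are; hence $\omega\mapsto d(x,\mathcal{A}(\omega))$ is measurable, completing (i). For uniqueness, if $\mathcal{A}'$ is another random $\mathcal{D}(\mathbb{H})$ attractor, then for every $t$, by strict invariance of $\mathcal{A}'$, $dist(\mathcal{A}'(\omega),\mathcal{A}(\omega))=dist(\varphi(t,\theta_{-t}\omega)\mathcal{A}'(\theta_{-t}\omega),\mathcal{A}(\omega))$, and the right-hand side tends to $0$ as $t\to\infty$ since $\mathcal{A}$ attracts $\mathcal{A}'\in\mathcal{D}(\mathbb{H})$; the left-hand side being independent of $t$, it vanishes, so $\mathcal{A}'(\omega)\subseteq\mathcal{A}(\omega)$, and symmetry gives equality. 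I expect the measurability step to be the main obstacle --- reducing the uncountable intersection in the formula to a countable one and handling the random-closed-set structure of $\varphi(t,\theta_{-t}\omega)K(\theta_{-t}\omega)$ --- while the structural heart of the argument, reused in the attraction and invariance proofs alike, is the omega-limit characterization established at the outset.
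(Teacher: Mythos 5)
The paper gives no proof of this proposition---it is quoted directly from \cite{BLL}---so there is nothing internal to compare against; your argument is the standard omega-limit construction used there (and in Crauel--Flandoli), and it is correct in all essentials: the characterization of $\mathcal{A}(\omega)$ as the set of limits of $\varphi(t_n,\theta_{-t_n}\omega)x_n$ with $t_n\to\infty$, $x_n\in K(\theta_{-t_n}\omega)$ does indeed drive nonemptiness, compactness, attraction, invariance and uniqueness exactly as you describe. Two small points of precision: the identity $d(x,\mathcal{A}(\omega))=\sup_n d(x,\overline{D_n(\omega)})$ is not a consequence of the compactness of $\mathcal{A}(\omega)$ alone, since the sets $\overline{D_n(\omega)}$ need not be compact---it follows instead from the asymptotic compactness of sequences drawn from the tails $D_n(\omega)$, by the same extraction you use for nonemptiness; and your measurability step invokes continuity of $\varphi$ in $t$, which is not among the stated hypotheses (only joint measurability and continuity in the space variable are assumed) and must either be added as an assumption, as it holds for the RDS of this paper, or be replaced by a measurable-selection/projection argument as in \cite{BLL}.
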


Especially, when we focus on SLDS and denote $\mathcal{D}(\ell^2)$
the set of all tempered random sets of $\ell^2$, it yields the
following result:

\begin{prop}(See \cite{HSZ}.) \label{condition} Suppose that

(a) there exists a random bounded absorbing set
$\mathcal{K}(\omega)\in \mathcal{D}(\ell^2)$ for the continuous RDS
$\{\varphi(t, \omega)\}_{t\geq 0, \omega\in \Omega}$;

(b) the RDS $\{\varphi(t, \omega)\}_{t\geq 0, \omega\in \Omega}$ is
random asymptotically null on $\mathcal{K}(\omega)$, i.e., for any
$\epsilon>0$, there exist $T(\epsilon, \omega, \mathcal{K})>0$ and
$I_0(\epsilon, \omega, \mathcal{K})\in \mathbb{N}$ such that
\begin{eqnarray}\label{Null}
\sup_{u\in \mathcal{K}(\omega)}\sum_{|i|>I_0(\epsilon, \omega,
\mathcal{K}(\omega))}| (\varphi_i(t, \theta_{-t}\omega,
u(\theta_{-t}\omega))|^2\leq \epsilon^2, \ \ \forall t\geq
T(\epsilon, \omega, \mathcal{K}(\omega)).
\end{eqnarray}

Then the RDS $\{\varphi(t, \omega, \cdot)\}_{t\geq 0, \omega\in
\Omega}$ possesses a unique global random $\mathcal{D}(\ell^2)$
attractor given by
\begin{eqnarray*}
\mathcal{\tilde{A}}(\omega)=\bigcap_{\tau\geq T(\omega,
\mathcal{K})} \overline{\bigcup_{t\geq \tau}
\varphi(t,\theta_{-t}\omega, \mathcal{K}(\theta_{-t}\omega))}.
\end{eqnarray*}
\end{prop}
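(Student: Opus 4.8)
The plan is to derive Proposition~\ref{condition} from Proposition~\ref{attractor1}: the only thing that really needs to be done is to show that hypotheses (a)--(b) force the abstract asymptotic compactness condition appearing in Proposition~\ref{attractor1}. First I would arrange that the absorbing set is closed by replacing $\mathcal{K}(\omega)$ with its closure $\overline{\mathcal{K}(\omega)}$; this is still bounded with the same random radius, hence still tempered and still a member of $\mathcal{D}(\ell^2)$, it still absorbs every $B\in\mathcal{D}(\ell^2)$, and the tail bound \eqref{Null} persists by continuity, so nothing is lost. Fixing $\omega$ outside a null set, I would then take $t_n\to\infty$ and points $x_n=\varphi(t_n,\theta_{-t_n}\omega)u_n$ with $u_n\in\mathcal{K}(\theta_{-t_n}\omega)$; since $\mathcal{K}$ absorbs itself, $x_n\in\mathcal{K}(\omega)$ for all large $n$, so $(x_n)$ is bounded in $\ell^2$.

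Next I would use reflexivity of $\ell^2$ to pass to a subsequence (not relabelled) with $x_n\rightharpoonup x$ weakly, and the goal becomes to upgrade this to strong convergence via the standard ``finite head $+$ small tail'' decomposition. Given $\epsilon>0$, hypothesis (b) furnishes $I_0=I_0(\epsilon,\omega,\mathcal{K})$ and $T=T(\epsilon,\omega,\mathcal{K})$ with $\sum_{|i|>I_0}|(x_n)_i|^2\le\epsilon^2$ for every $n$ with $t_n\ge T$, and weak lower semicontinuity of the norm restricted to the tail gives $\sum_{|i|>I_0}|x_i|^2\le\epsilon^2$ as well. On the complementary finite index set $\{|i|\le I_0\}$ the coordinate projection has finite rank, so $x_n\rightharpoonup x$ forces $(x_n)_i\to x_i$ for each such $i$ and hence $\sum_{|i|\le I_0}|(x_n)_i-x_i|^2\to0$. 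Splitting $\|x_n-x\|^2$ into head and tail contributions and combining the three estimates yields $\limsup_n\|x_n-x\|^2\le C\epsilon^2$ for an absolute constant $C$; letting $\epsilon\downarrow0$ gives $x_n\to x$ in $\ell^2$. This establishes the asymptotic compactness hypothesis, so Proposition~\ref{attractor1} applies with $K=\mathcal{K}$ and produces the unique global random $\mathcal{D}(\ell^2)$ attractor together with the representation for $\tilde{\mathcal{A}}(\omega)$, its measurability being inherited from Proposition~\ref{attractor1}.

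The only genuinely delicate step is the tail/head argument of the second paragraph: one must be careful that the ``$n$ large'' coming from $t_n\ge T$ and the ``$n$ large'' coming from coordinatewise convergence are both swallowed by a single $\limsup_n$, and that the uniformity over $\mathcal{K}(\omega)$ in (b) is precisely what makes the tail estimate applicable to the particular points $x_n$. Everything else --- passing to the closure, temperedness, boundedness, self-absorption of $\mathcal{K}$, continuity of $\varphi$ --- is routine bookkeeping, and the final conclusion is then an immediate appeal to Proposition~\ref{attractor1}.
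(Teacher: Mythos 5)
Your argument is correct and is exactly the standard ``finite head $+$ uniformly small tail'' proof of this proposition; note that the paper itself gives no proof but simply cites \cite{HSZ}, and your route (upgrading (a)--(b) to the asymptotic compactness hypothesis of Proposition \ref{attractor1} via weak convergence, finite-rank projections, and the uniform tail bound) is the same one used in that reference. The auxiliary points you flag --- passing to the closure of $\mathcal{K}$, self-absorption of $\mathcal{K}$ to get boundedness of the sequence, and the single $\limsup$ absorbing both ``$n$ large'' requirements --- are handled correctly.
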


\section{Stochastic porous media lattice equations}\label{s3}
We note that system \eqref{sys1} can be interpreted as a system of
integral equations
\begin{eqnarray} \label{sys2}
\begin{split}
u_i&(t)=u_i(0)+\int_0^t[\Phi(u_{i-1}(s))-2\Phi(u_i(s))
+\Phi(u_{i+1}(s))\\
&-\lambda
(u_i(s)+|u_i(s)|^{p-1}u_i(s))+g_i]ds+\alpha_i\int_0^tu_i(s)\circ
dw(s), \quad i\in \mathbb{Z},
\end{split}
\end{eqnarray}
where the stochastic integral is understood to be in the
Stratonovich sense.

{\bf Assumptions on $\Phi$} \quad $\Phi: \mathbb{R}\rightarrow
\mathbb{R}$ is continuous, $\Phi(0)=0$ and there exist constants
$c_1, c_2\in (0, \infty)$ such that
\begin{eqnarray}\label{cond0}
\frac{(p+1)^2}{4}c_2|u|^{p-1}\le\Phi'(u)\le c_1(1+|u|^{p-1}), \
\forall ~u\in \mathbb{R}.
\end{eqnarray}
Then \eqref{cond0} implies the following \textit{monotonicity}
condition:
\begin{eqnarray}
(\Phi(u)-\Phi(v))(u-v)\ge k|u-v|^{p+1}-a_i, \ \forall ~u, v\in
\mathbb{R},\label{cond1}
\end{eqnarray}
where $k\in (0, \infty)$ and $a_i\in [0, \infty)$ such that
$(a_i)_{i\in \mathbb{Z}}\in \ell^1$, $p>1$ the same one in
\eqref{sys1}. Actually, these conditions are satisfied for
$\Phi(u)=u|u|^{p-1}$ (see e.g. \cite{BGLR, DRRW}).

For convenience, we now formulate system \eqref{sys2} as a
stochastic differential equation in $\ell^2$. Define $\mathbf{B}$
and its adjoint operator $\mathbf{B}^*$ from $\ell^2$ to $\ell^2$ as
follows. For $u=(u_i)_{i\in \mathbb{Z}}\in \ell^2$,
\begin{eqnarray*} \label{operator1}
(\mathbf{B}u)_i=u_{i+1}-u_i, ~ (\mathbf{B}^*u)_i=u_{i-1}-u_i.
\end{eqnarray*}
Also, define the operator $ (Au)_i=-u_{i-1}+2u_i-u_{i+1}.$ We have
$(Au, u)=(\mathbf{B}u, \mathbf{B}^*u)=\|\mathbf{B}u\|^2\le
4\|u\|^2$, which means that $A$ is a bounded operator from $\ell^2$
to itself.

Now, system \eqref{sys1} with initial values $u_0=(u_{0,i})_{i\in
\mathbb{Z}}$ can be rewritten as the following equation in $\ell^2$
for $t\ge 0$ and $\omega\in \Omega$,
\begin{eqnarray} \label{sys3}
\begin{split}
u(t)=u_0+\int_0^t&[-A(\Phi (u(s)))-\lambda
(u(s)+|u(s)|^{p-1}u(s))+g]ds\\
&+\alpha\int_0^tu(s)\circ dw(s),
\end{split}
\end{eqnarray}
where $A(\Phi (u))=A(\Phi (u_i)_{i\in
\mathbb{Z}})=(-\Phi(u_{i-1})+2\Phi(u_i) -\Phi(u_{i+1}))_{i\in
\mathbb{Z}}, g=(g_{i})_{i\in \mathbb{Z}}\in \ell^2$ and
$\alpha=(\alpha_{i})_{i\in \mathbb{Z}}$.

To prove that this stochastic equation \eqref{sys3} generates a
random dynamical system, we will transform it into a random
differential equation in $\ell^2$. First, we need to recall some
properties of the Ornstein-Uhlenbeck processes.

Consider $(\Omega, \mathcal{F}, \mathbb{P})$ be a probability space,
where $\Omega$ is a subset of $\mathcal{C}_0(\mathbb{R},
\mathbb{R})=\{\omega\in \mathcal{C}(\mathbb{R}, \mathbb{R}):
\omega(0)=0\},$ endowed with the compact open topology (see
\cite{Arnold}), $\mathcal{F}$ is the Borel $\sigma$-algebra and
$\mathbb{P}$ is the corresponding Wiener measure on $\Omega$. Let
$\theta_t\omega(\cdot)=\omega(\cdot+t)-\omega(t)$, $t\in
\mathbb{R}$, then $(\Omega, \mathcal{F},
\mathbb{P},(\theta_{t})_{t\in \mathbb{R}})$ is an ergodic metric
dynamical system.

To solve \eqref{sys3}, we make a change of variables
\begin{eqnarray*} \label{Solution}
v(t)=e^{-\alpha z(\theta_t\omega)}u(t),
\end{eqnarray*}
where $u(t)$ is a solution of \eqref{sys3} and
$z(\theta_t\omega)=-\int^0_{-\infty}e^{\tau}\theta_t\omega(\tau)d\tau$
is a pathwise solution of the Ornstein-Uhlenbeck equation
\begin{eqnarray} \label{OU1}
dz+zdt=dw(t).
\end{eqnarray}
By \cite{BLL, CL}, we know that there exists a $\theta_t$-variant
set $\Omega'\subseteq \Omega$ of full $\mathbb{P}$ measure such that
$z(\theta_t\omega)$ is continuous in $t$ for every $\omega\in
\Omega'$, and the random variable $|z(\omega)|$ is tempered. In
addition, for every $\omega\in \Omega'$, we have the following
limits:
\begin{eqnarray}\label{temperness}
\begin{split}
\lim_{t\rightarrow\pm\infty}\frac{|z(\theta_t\omega)|}{|t|}=0
~\mbox{and}
~\lim_{t\rightarrow\pm\infty}\frac{1}{t}\int_0^tz(\theta_s\omega)ds=0.
\end{split}
\end{eqnarray}
Hereafter, we will write $\Omega$ as $\Omega'$ instead. Then $v(t)$
satisfies the following evolution equation with random coefficients
but without white noise
\begin{eqnarray} \label{sys4}
\begin{split}
\frac{dv(t)}{dt}=-e^{-\alpha z(\theta_t\omega)}A(\Phi& (e^{\alpha
z(\theta_t\omega)}v))+(\alpha
z(\theta_t\omega)-\lambda)v\\
&-\lambda e^{\alpha(p-1) z(\theta_t\omega)}|v|^{p-1}v+e^{-\alpha
z(\theta_t\omega)}g, \\
v(0)&=v_0\in \ell^2.
\end{split}
\end{eqnarray}

Now, we have the following result.
\begin{theorem}\label{uniquness}
Let $T>0$ and $v_0\in \ell^2$. Then the following two statements
hold:

(1) for every $\omega\in \Omega$, system \eqref{sys4} has a unique
solution $v(\cdot, \omega, v_0)\in \mathcal{C}([0, T], \ell^2)$;

(2) the solution $v$ of \eqref{sys4} depends continuously on the
initial data $v_0$.
\end{theorem}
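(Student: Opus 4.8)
The plan is to regard \eqref{sys4} as an ordinary differential equation $\frac{dv}{dt}=F(t,v,\omega)$ in the Banach space $\ell^2$, with
\begin{eqnarray*}
F(t,v,\omega)=-e^{-\alpha z(\theta_t\omega)}A\bigl(\Phi(e^{\alpha z(\theta_t\omega)}v)\bigr)+\bigl(\alpha z(\theta_t\omega)-\lambda\bigr)v-\lambda e^{\alpha(p-1)z(\theta_t\omega)}|v|^{p-1}v+e^{-\alpha z(\theta_t\omega)}g ,
\end{eqnarray*}
and to combine the classical local existence--uniqueness theorem for Banach-space valued ODEs with an a priori energy estimate excluding finite-time blow-up. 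Fix $\omega\in\Omega$ and $T>0$. Since $t\mapsto z(\theta_t\omega)$ is continuous, the scalar weights $e^{\pm\alpha z(\theta_t\omega)}$ and $e^{\alpha(p-1)z(\theta_t\omega)}$ are bounded on $[0,T]$. The first step is to check that $F(t,\cdot,\omega)$ maps $\ell^2$ into $\ell^2$ and is Lipschitz on bounded sets, uniformly in $t\in[0,T]$. This rests on: (i) $A$ is bounded on $\ell^2$ with $\|Au\|\le 4\|u\|$; (ii) the superposition operator $v\mapsto(\Phi(v_i))_i$ is locally Lipschitz on $\ell^2$, since the right inequality in \eqref{cond0} yields $|\Phi(s)-\Phi(r)|\le c_1\bigl(1+R^{p-1}\bigr)|s-r|$ for $|s|,|r|\le R$, and every ball of radius $R$ of $\ell^2$ is contained in the ball of radius $R$ of $\ell^\infty$ (as $\|u\|_\infty\le\|u\|$); (iii) the same embedding makes $v\mapsto(|v_i|^{p-1}v_i)_i$ locally Lipschitz on $\ell^2$, via $\bigl||s|^{p-1}s-|r|^{p-1}r\bigr|\le p\max\{|s|,|r|\}^{p-1}|s-r|$; (iv) the remaining terms are affine with coefficients bounded on $[0,T]$. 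This is the point where the sequence setting substitutes for the non-compact Sobolev embeddings mentioned in the introduction.

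Granting these properties, the contraction mapping principle in $\mathcal{C}([0,\tau],\ell^2)$ (equivalently, Picard iteration) produces, for $\tau>0$ small enough, a unique solution $v(\cdot,\omega,v_0)$ that moreover depends Lipschitz-continuously on $v_0$. The standard continuation argument then reduces \emph{(1)} to an a priori bound on $\|v(t)\|$ over $[0,T]$. To obtain it, take the inner product of \eqref{sys4} with $v$. Setting $u=e^{\alpha z(\theta_t\omega)}v$ and summing by parts,
\begin{eqnarray*}
\bigl(A\Phi(u),v\bigr)=e^{-\alpha z(\theta_t\omega)}\bigl(A\Phi(u),u\bigr)=e^{-\alpha z(\theta_t\omega)}\sum_{i\in\mathbb{Z}}\bigl(\Phi(u_{i+1})-\Phi(u_i)\bigr)(u_{i+1}-u_i) ,
\end{eqnarray*}
and applying the monotonicity condition \eqref{cond1} to each pair $(u_{i+1},u_i)$ gives $\sum_i(\Phi(u_{i+1})-\Phi(u_i))(u_{i+1}-u_i)\ge k\sum_i|u_{i+1}-u_i|^{p+1}-\sum_i a_i\ge-\sum_i a_i$. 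Next, $-\lambda e^{\alpha(p-1)z(\theta_t\omega)}(|v|^{p-1}v,v)=-\lambda e^{\alpha(p-1)z(\theta_t\omega)}\sum_i|v_i|^{p+1}\le0$ is dissipative and may be discarded, $((\alpha z(\theta_t\omega)-\lambda)v,v)\le c(\omega,T)\|v\|^2$, and $(e^{-\alpha z(\theta_t\omega)}g,v)\le\tfrac12 e^{-2\alpha z(\theta_t\omega)}\|g\|^2+\tfrac12\|v\|^2$. Collecting the estimates,
\begin{eqnarray*}
\frac{d}{dt}\|v(t)\|^2\le C_1(\omega,T)+C_2(\omega,T)\|v(t)\|^2 ,
\end{eqnarray*}
so Gronwall's inequality bounds $\sup_{t\in[0,T]}\|v(t)\|^2$ in terms of $\|v_0\|$, $\omega$ and $T$. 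Hence no finite-time blow-up occurs and the local solution extends to $v(\cdot,\omega,v_0)\in\mathcal{C}([0,T],\ell^2)$, which proves \emph{(1)}.

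For \emph{(2)} (which also re-establishes uniqueness on all of $[0,T]$), let $v,\tilde v$ solve \eqref{sys4} with initial data $v_0,\tilde v_0$; by the bound just obtained both trajectories remain in a common ball $B_R$ of $\ell^2$ on $[0,T]$. Put $w=v-\tilde v$ and take the inner product of the difference of the two equations with $w$. The local Lipschitz bounds from the first step (with constant $L=L(R,\omega,T)$, using $\|Au\|\le4\|u\|$ for the diffusion term) give $\frac{d}{dt}\|w(t)\|^2\le C_3(R,\omega,T)\|w(t)\|^2$, whence
\begin{eqnarray*}
\|v(t,\omega,v_0)-\tilde v(t,\omega,\tilde v_0)\|^2\le e^{C_3(R,\omega,T)\,t}\,\|v_0-\tilde v_0\|^2,\qquad t\in[0,T].
\end{eqnarray*}
Taking $v_0=\tilde v_0$ yields uniqueness, and the displayed inequality is exactly the continuous dependence asserted in \emph{(2)}. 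I expect the only genuinely delicate point to be the one already highlighted: deducing the local Lipschitz property of the $\Phi$- and $|\cdot|^{p-1}(\cdot)$-superposition operators on $\ell^2$ from the merely polynomial growth permitted by \eqref{cond0}; the bookkeeping with the pathwise coefficient $z(\theta_t\omega)$ on the fixed interval $[0,T]$ is routine.
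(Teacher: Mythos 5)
Your proposal is correct and follows essentially the same route as the paper: local Lipschitz continuity of the superposition operators on bounded sets of $\ell^2$ (via $\|u\|_\infty\le\|u\|$ and the growth bound in \eqref{cond0}) gives local existence and uniqueness, the energy estimate using the monotonicity condition \eqref{cond1} to control $(A\Phi(e^{\alpha z(\theta_t\omega)}v),v)$ from below rules out blow-up, and a Gronwall argument on the difference of two solutions yields continuous dependence. The paper's proof is merely a terser version of the same argument.
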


\begin{proof}
(1) For any fixed $T>0$ and $\omega\in \Omega$, let $u, v\in
\ell^2$,
\begin{eqnarray*}
\begin{split}
\|A(\Phi &(e^{\alpha z(\theta_t\omega)}u))-A(\Phi (e^{\alpha
z(\theta_t\omega)}v))\|^2\\
&=\sum_{i\in \mathbb{Z}}(A(\Phi (e^{\alpha
z(\theta_t\omega)}u_i))-A(\Phi (e^{\alpha
z(\theta_t\omega)}v_i)))^2\\
&\quad\le 12 \sum_{i\in \mathbb{Z}}(\Phi (e^{\alpha
z(\theta_t\omega)}u_i)-\Phi (e^{\alpha
z(\theta_t\omega)}v_i))^2\\
&\quad\quad\le 12c^2_1(1+e^{\alpha(p-1)
z(\theta_t\omega)}(\|u\|+\|v\|)^{p-1})^{2}\|u-v\|^{2},
\end{split}
\end{eqnarray*}
whence
\begin{eqnarray*}
\begin{split}\|A(\Phi &(e^{\alpha z(\theta_t\omega)}u))
-A(\Phi (e^{\alpha
z(\theta_t\omega)}v))\|\\
&\le C(1+e^{\alpha(p-1)
z(\theta_t\omega)}(\|u\|+\|v\|)^{p-1})\|u-v\|\\
&\quad\le C(1+e^{\alpha(p-1) \max_{t\in [0,
T]}|z(\theta_t\omega)|}(\|u\|+\|v\|)^{p-1})\|u-v\|,
\end{split}
\end{eqnarray*}
and
\begin{eqnarray*}
\begin{split}
\||u|^{p-1}u-|v|^{p-1}v\|\le C'(\|u\|+\|v\|)^{p-1}\|u-v\|,
\end{split}
\end{eqnarray*}
which implies that $A(\Phi (e^{\alpha z(\theta_t\omega)}v))$ and
$|v|^{p-1}v$ are Lipschitz in bounded sets of $\ell^2$ with respect
to $v$ uniformly for any $t\in [0, T]$. So by standard arguments,
system \eqref{sys4} possesses a local solution $v(\cdot, \omega,
v_0)\in \mathcal{C}([0, T_{\max}), \ell^2)$, where $[0, T_{\max})$
is the maximal interval of existence of the solution of
\eqref{sys4}. Next, we need to prove that the local solution in fact
a global one. From \eqref{sys4}, it yields that
\begin{eqnarray}\label{est1}
\begin{split}
\frac{d}{dt}\|v(t)\|^2+&2\lambda e^{\alpha
(p-1)z(\theta_t\omega)}\|v\|_{p+1}^{p+1} =2e^{-\alpha
z(\theta_t\omega)}(-A(\Phi (e^{\alpha z(\theta_t\omega)}v)),
v)\\
&+2(\alpha z(\theta_t\omega)-\lambda)\|v\|^2+2e^{-\alpha
z(\theta_t\omega)}(g, v).
\end{split}
\end{eqnarray}
By \eqref{cond1}, we have
\begin{eqnarray}\label{est11}
\begin{split}
&(e^{-\alpha z(\theta_t\omega)}A(\Phi (e^{\alpha
z(\theta_t\omega)}v)), v)=e^{-2\alpha z(\theta_t\omega)}(B(\Phi
(e^{\alpha z(\theta_t\omega)}v)), B(e^{\alpha
z(\theta_t\omega)}v))\\
&~=e^{-2\alpha z(\theta_t\omega)}\sum_{i\in \mathbb{Z}}(\Phi
(e^{\alpha z(\theta_t\omega)}v_{i+1})-\Phi (e^{\alpha
z(\theta_t\omega)}v_i), e^{\alpha
z(\theta_t\omega)}(v_{i+1}-v_i)\\
&\quad\quad\ge ke^{\alpha (p-1)z(\theta_t\omega)}\sum_{i\in
\mathbb{Z}}|v_{i+1}-v_i|^{p+1}-e^{-2\alpha
z(\theta_t\omega)}\sum_{i\in \mathbb{Z}}a_i\\
&\quad\quad\quad\ge-e^{-2\alpha z(\theta_t\omega)}\sum_{i\in
\mathbb{Z}}a_i,
\end{split}
\end{eqnarray}
which implies
\begin{eqnarray}\label{est2}
\begin{split}
\frac{d}{dt}\|v(t)\|^2+&2\lambda e^{\alpha
(p-1)z(\theta_t\omega)}\|v\|_{p+1}^{p+1}+\lambda\|v(t)\|^2\\
&\le (2\alpha z(\theta_t\omega)-\lambda)\|v\|^2
+(\frac{8\|g\|^2}{\lambda}+2\|a\|_1)e^{-2\alpha z(\theta_t\omega)}.
\end{split}
\end{eqnarray}
Due to Gronwall lemma, for $t>0$,
\begin{eqnarray}\label{auxi est}
\begin{split}
&\|v(t)\|^2+\lambda\int_0^te^{-\lambda
s+2\alpha\int_0^sz(\theta_r\omega)dr}\|v(s, \omega, v_0)\|^{2}ds
\\
&\quad+2\lambda\int_0^te^{\alpha (p-1)z(\theta_s\omega)-\lambda
s+2\alpha\int_0^s
z(\theta_r\omega)dr}\|v(s, \omega, v_0)\|_{p+1}^{p+1}ds\\
&\le e^{-\lambda t+2\alpha\int_0^tz(\theta_s\omega)ds}\|v_0\|^2\\
&\quad\quad+(\frac{8\|g\|^2}{\lambda}+2\|a\|_1)e^{-\lambda
t+2\alpha\int_0^tz(\theta_s\omega)ds}\int_0^te^{-2\alpha
z(\theta_s\omega)+\lambda s-2\alpha\int_0^sz(\theta_r\omega)dr}ds.
\end{split}
\end{eqnarray}
Denote
\begin{eqnarray*}
\begin{split}
\eta(\omega)=&(\frac{8\|g\|^2}{\lambda}+2\|a\|_1)\max_{t\in [0,
T]}(e^{-\lambda
t+2\alpha\int_0^tz(\theta_s\omega)ds}\int_0^te^{-2\alpha
z(\theta_s\omega)+\lambda s-2\alpha\int_0^sz(\theta_r\omega)dr}ds)
\end{split}
\end{eqnarray*}
and
\begin{eqnarray*}
\begin{split}
\xi(\omega)=2\alpha\int_0^T|z(\theta_s\omega)|ds,
\end{split}
\end{eqnarray*}
then we have
\begin{eqnarray*}
\begin{split}
\|v(t)\|^2\le& \|v_0\|^2e^{\xi(\omega)}+\eta(\omega),
\end{split}
\end{eqnarray*}
which implies that the solution $v$ is defined in any interval $[0,
T]$.

(2) Let $u_0, v_0\in \ell^2$ and $X(t)=X(t, \omega, u_0), Y(t)=Y(t,
\omega, v_0)$ be two solutions of \eqref{sys4}. Then, denoting
$\Delta(t)=X(t)-Y(t)$, we get
\begin{eqnarray*}
\begin{split}
\frac{d\Delta(t)}{dt}=&-e^{-\alpha z(\theta_t\omega)}(A(\Phi
(e^{\alpha z(\theta_t\omega)}X))-A(\Phi (e^{\alpha
z(\theta_t\omega)}Y)))+(\alpha
z(\theta_t\omega)-\lambda)\Delta(t)\\
&-e^{\alpha(p-1)z(\theta_t\omega)} (|X|^{p-1}X-|Y|^{p-1}Y),
\end{split}
\end{eqnarray*}
and then
\begin{eqnarray*}
\begin{split}
\frac{d}{dt}&\|\Delta(t)\|^2=-2e^{-\alpha z(\theta_t\omega)}(A(\Phi
(e^{\alpha z(\theta_t\omega)}X))-A(\Phi (e^{\alpha
z(\theta_t\omega)}Y)), e^{\alpha
z(\theta_t\omega)}\Delta(t))\\
&\quad+2(\alpha
z(\theta_t\omega)-\lambda)\|\Delta(t)\|^2-e^{\alpha(p-1)z(\theta_t\omega)}
(|X|^{p-1}X-|Y|^{p-1}Y,
\Delta(t))\\
&\quad\quad\quad\quad\le 2(L+\alpha
z(\theta_t\omega))\|\Delta(t)\|^2\le \varrho \|\Delta(t)\|^2,
\end{split}
\end{eqnarray*}
where $\varrho=2(L+\alpha\max_{t\in [0, T]}|z(\theta_t\omega)|)$,
here $L$ denotes the Lipschitz constant of $\Phi$ and the term
$|X|^{p-1}X-|Y|^{p-1}Y$ corresponding to a bounded set where $X$ and
$Y$ belong to. Now by a simply computation, we have
\begin{eqnarray*}
\begin{split}
\sup_{t\in [0, T]}\|X(t)-Y(t)\|^2\le e^{\varrho T}\|u_0-v_0\|^2,
\end{split}
\end{eqnarray*}
which completes the proof.

\end{proof}

\begin{theorem}\label{RDS}
System \eqref{sys4} generates a continuous random dynamical system
$(\varphi(t))_{t\ge 0}$ over $(\Omega, \mathcal{F}, \mathbb{P},
(\theta_t)_{t\in \mathbb{R}})$, where
\begin{eqnarray*}
\begin{split}
\varphi(t, \omega, v_0)=v(t, \omega, v_0)=e^{-\alpha
z(\theta_t\omega)}u(t, \omega, e^{\alpha z(\omega)}v_0)
\end{split}
\end{eqnarray*}
for $v_0\in \ell^2$, $t\ge 0$ and for all $\omega\in \Omega$.
\end{theorem}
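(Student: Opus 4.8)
The plan is to verify the three defining properties of a continuous RDS for the solution map $\varphi(t,\omega,v_0):=v(t,\omega,v_0)$, drawing on Theorem~\ref{uniquness} as the essential input, and then to check the displayed formula relating $\varphi$ to $u$. First I would record that $\varphi(0,\omega,v_0)=v(0,\omega,v_0)=v_0$, so $\varphi(0,\omega)$ is the identity on $\ell^2$; that $t\mapsto\varphi(t,\omega)v_0$ is continuous because $v(\cdot,\omega,v_0)\in\mathcal{C}([0,T],\ell^2)$ for every $T>0$; and that $v_0\mapsto\varphi(t,\omega)v_0$ is (Lipschitz) continuous by the estimate $\sup_{s\in[0,t]}\|v(s,\omega,u_0)-v(s,\omega,v_0)\|^2\le e^{\varrho t}\|u_0-v_0\|^2$ from Theorem~\ref{uniquness}(2).

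Next I would address measurability. The right-hand side of \eqref{sys4}, viewed as $F(t,\omega,v)$, is continuous in $(t,v)$ for each fixed $\omega$ and measurable in $\omega$ for each fixed $(t,v)$, since $t\mapsto z(\theta_t\omega)$ is continuous and $\omega\mapsto z(\theta_t\omega)$ is measurable. Writing the solution of \eqref{sys4} on $[0,T]$ as the pointwise limit of Picard iterates (each of which is jointly measurable in $(t,\omega,v_0)$ and continuous in $(t,v_0)$) exhibits $(t,\omega,v_0)\mapsto\varphi(t,\omega,v_0)$ as $(\mathcal{B}[0,\infty)\times\mathcal{F}\times\mathcal{B}(\ell^2),\mathcal{B}(\ell^2))$-measurable; letting $T\to\infty$ gives this on $[0,\infty)$.

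The crux is the cocycle property. Fix $s,t\ge0$, $\omega\in\Omega$, $v_0\in\ell^2$, and set $v_1:=\varphi(s,\omega,v_0)=v(s,\omega,v_0)$. Since $(\theta_t)_{t\in\mathbb{R}}$ is a flow, $\theta_r(\theta_s\omega)=\theta_{r+s}\omega$, hence $z(\theta_r(\theta_s\omega))=z(\theta_{r+s}\omega)$ for all $r$. Using this I would show that the two maps $r\mapsto v(r+s,\omega,v_0)$ and $r\mapsto v(r,\theta_s\omega,v_1)$ both belong to $\mathcal{C}([0,T],\ell^2)$, both solve \eqref{sys4} with driving element $\theta_s\omega$, and both take the value $v_1$ at $r=0$: indeed, writing \eqref{sys4} in integral form for $r\mapsto v(r+s,\omega,v_0)$ and substituting $\tau\mapsto\tau-s$ in the integral turns every occurrence of $z(\theta_\tau\omega)$ into $z(\theta_{\tau-s}(\theta_s\omega))$, which is exactly the coefficient appearing in \eqref{sys4} driven by $\theta_s\omega$. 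The uniqueness statement in Theorem~\ref{uniquness}(1) then forces the two maps to coincide on $[0,T]$, i.e. $\varphi(s+t,\omega,v_0)=\varphi(t,\theta_s\omega,\varphi(s,\omega,v_0))$. Finally, the displayed identity $\varphi(t,\omega,v_0)=e^{-\alpha z(\theta_t\omega)}u(t,\omega,e^{\alpha z(\omega)}v_0)$ is just the change of variables $v(t)=e^{-\alpha z(\theta_t\omega)}u(t)$ with the initial value $u(0)=e^{\alpha z(\omega)}v_0$ chosen so that $v(0)=v_0$.

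I expect the cocycle identity to be the only genuine difficulty: the point requiring care is that the random coefficients in \eqref{sys4} transform correctly under the shift $\theta_s$, and this rests entirely on the cocycle relation $z(\theta_{r+s}\omega)=z(\theta_r(\theta_s\omega))$ for the Ornstein--Uhlenbeck process; once that is in place, uniqueness from Theorem~\ref{uniquness} closes the argument. The joint measurability is routine and the continuity claims are immediate from Theorem~\ref{uniquness}, but both should be stated for completeness.
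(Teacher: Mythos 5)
Your proposal is correct and follows the same route the paper intends: the paper's own proof is only a three-sentence sketch ("follows from Theorem \ref{uniquness}\dots the rest follows from the chain rule"), and your argument simply carries out in full the steps it gestures at --- continuity from Theorem \ref{uniquness}, measurability via approximation, and the cocycle property from uniqueness of solutions combined with the flow relation $z(\theta_{r+s}\omega)=z(\theta_r(\theta_s\omega))$. No gaps; your version is in fact more complete than the one printed in the paper.
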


\begin{proof}
Actually, that $\varphi$ is a continuous random dynamical system
follows from Theorem \ref{uniquness}. The measurability of $\varphi$
is indicated by the transformation in \eqref{OU1}. The rest of the
proof just follows from the chain rule.
\end{proof}

The random dynamical system $\varphi$ generated by \eqref{sys4} is
conjugated to the one generated by \eqref{sys3} (see \cite{CL}). In
the sequel, we will just consider the random dynamical system
$\varphi$.

\section{Existence of a unique global random attractor}\label{s4}
In this section, we will prove the existence of a random attractor
for the SLDS generated by system \eqref{sys4}. Our main result is
\begin{theorem}\label{random attra.}
The SLDS $\varphi$ generated by system \eqref{sys4} has a unique
global random attractor.
\end{theorem}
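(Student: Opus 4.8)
The plan is to verify the two hypotheses of Proposition~\ref{condition}: first the existence of a tempered random bounded absorbing set $\mathcal{K}(\omega)\in\mathcal{D}(\ell^2)$, and second the ``random asymptotically null'' tail estimate on $\mathcal{K}(\omega)$. Once both are in place, the conclusion is immediate from Proposition~\ref{condition} applied to the continuous RDS $\varphi$ produced in Theorem~\ref{RDS}.

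For the absorbing set, I would start from inequality \eqref{est2}, which already isolates the dissipative term $\lambda\|v(t)\|^2$ on the left and puts $(2\alpha z(\theta_t\omega)-\lambda)\|v\|^2$ together with the forcing term $(\tfrac{8\|g\|^2}{\lambda}+2\|a\|_1)e^{-2\alpha z(\theta_t\omega)}$ on the right. Applying Gronwall's lemma on $[0,t]$ and then replacing $\omega$ by $\theta_{-t}\omega$ (the pullback substitution) yields a bound of the form $\|v(t,\theta_{-t}\omega,v_0)\|^2\le e^{-\lambda t+2\alpha\int_{-t}^0 z(\theta_s\omega)ds}\|v_0\|^2 + R_1(\omega)$, where $R_1(\omega)=(\tfrac{8\|g\|^2}{\lambda}+2\|a\|_1)\int_{-\infty}^0 e^{\lambda s + 2\alpha\int_s^0 z(\theta_r\omega)dr}\,ds$. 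Here I would invoke the temperedness limits in \eqref{temperness}: since $\tfrac1t\int_0^t z(\theta_s\omega)ds\to 0$, the exponent $-\lambda t+2\alpha\int_{-t}^0 z(\theta_s\omega)ds\to-\infty$, so for any tempered $v_0\in B(\theta_{-t}\omega)$ with $B\in\mathcal{D}(\ell^2)$ the first term tends to $0$; and the same limits guarantee the integral $R_1(\omega)$ converges and is finite for a.e. $\omega$, with $\omega\mapsto R_1(\omega)$ tempered. Thus $\mathcal{K}(\omega):=\{u\in\ell^2:\|u\|^2\le 1+R_1(\omega)\}$ is a closed, bounded, tempered random absorbing set, verifying hypothesis (a).

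For the tail estimate (hypothesis (b)), the standard cut-off technique for lattice systems applies. I would fix a smooth function $\rho:\mathbb{R}\to[0,1]$ with $\rho(s)=0$ for $|s|\le 1$ and $\rho(s)=1$ for $|s|\ge 2$, set $\rho_I(i)=\rho(i/I)$, and test \eqref{sys4} against $(\rho_I(i)v_i)_{i\in\mathbb{Z}}$. The key algebraic point is that the discrete-Laplacian-type term $(e^{-\alpha z}A(\Phi(e^{\alpha z}v)),\rho_I v)$ can be rewritten via the operators $\mathbf{B},\mathbf{B}^*$; using the monotonicity condition \eqref{cond1} on the ``diagonal'' part and controlling the commutator between multiplication by $\rho_I$ and $\mathbf{B}$ (which is $O(1/I)$ times lower-order norms, bounded on $\mathcal{K}(\omega)$) gives, after absorbing errors, a differential inequality $\frac{d}{dt}\sum_i\rho_I(i)|v_i|^2 + \lambda\sum_i\rho_I(i)|v_i|^2 \le (2\alpha z(\theta_t\omega))\sum_i\rho_I(i)|v_i|^2 + \varepsilon_I(\omega) + C e^{-2\alpha z(\theta_t\omega)}\sum_{|i|\ge I}a_i$, where $\varepsilon_I(\omega)\to0$ as $I\to\infty$ because $g\in\ell^2$, $(a_i)\in\ell^1$, and all relevant $v$-norms are uniformly bounded on $\mathcal{K}(\omega)$. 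Gronwall plus the pullback substitution $\omega\mapsto\theta_{-t}\omega$, together with the temperedness limits \eqref{temperness} exactly as in the absorbing-set step, then shows that for $t\ge T(\varepsilon,\omega)$ and $I\ge I_0(\varepsilon,\omega)$ the tail $\sum_{|i|>I_0}|\varphi_i(t,\theta_{-t}\omega,u(\theta_{-t}\omega))|^2\le\varepsilon^2$ uniformly over $u\in\mathcal{K}(\omega)$, which is \eqref{Null}.

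The main obstacle is the second step: making the cut-off estimate genuinely rigorous for the porous-media nonlinearity $\Phi$ rather than for a linear Laplacian. Unlike the classical lattice case, here the nonlinear term is $A(\Phi(e^{\alpha z}v))$, so after testing against $\rho_I v$ one must handle $\sum_i(\Phi(e^{\alpha z}v_{i+1})-\Phi(e^{\alpha z}v_i))(\rho_I(i+1)v_{i+1}-\rho_I(i)v_i)$; splitting this into a ``good'' monotone part $\rho_I(i+1)(\cdots)(v_{i+1}-v_i)$ controlled below by \eqref{cond1} and a ``commutator'' part $(\rho_I(i+1)-\rho_I(i))(\cdots)v_i$ requires the upper growth bound in \eqref{cond0}, namely $|\Phi(e^{\alpha z}v_{i+1})-\Phi(e^{\alpha z}v_i)|\le c_1(1+e^{\alpha(p-1)z}(\cdots))\,e^{\alpha z}|v_{i+1}-v_i|$, plus the fact that $|\rho_I(i+1)-\rho_I(i)|\le \|\rho'\|_\infty/I$ and that $\|v\|_{p+1}$ and $\|v\|$ stay bounded on $\mathcal{K}(\omega)$ by the uniform estimate \eqref{auxi est}. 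Keeping track of the various $e^{\pm\alpha z(\theta_t\omega)}$ weights and confirming that, after the pullback substitution and Gronwall, they are all tamed by \eqref{temperness} is the delicate bookkeeping; everything else is routine.
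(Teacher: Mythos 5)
Your proposal follows essentially the same route as the paper: verifying hypotheses (a) and (b) of Proposition~\ref{condition} via a Gronwall/pullback estimate from \eqref{est2} for the tempered absorbing ball, and a cut-off argument splitting the nonlinear term $A(\Phi(e^{\alpha z}v))$ into a monotone part controlled by \eqref{cond1} and an $O(1/N)$ commutator controlled by \eqref{cond0} and the integrated bound \eqref{auxi est}. This matches Lemmas~\ref{Absorbing} and \ref{Asymptotic Null} in structure and in all the key estimates, so the approach is correct and not materially different from the paper's.
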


To prove Theorem \ref{random attra.} we will use Proposition
\ref{condition}. We first need to prove that there exists an
absorbing set for $\varphi$ in $\ell^2$. Next, we will show the RDS
$\varphi$ is random asymptotically null in the sense of
\eqref{Null}.

\subsection{Existence of an absorbing set}
We need to prove that there exists a closed random tempered set
$\mathcal{K}\in\mathcal{D}(\ell^2)$ of absorption.
\begin{lemma} \label{Absorbing}
There exists a closed random tempered set $\mathcal{K}(\omega)\in $
$\mathcal{D}(\ell^2)$ such that for all $B\in \mathcal{D}(\ell^2)$
and a.e. $\omega \in \Omega$ there exists $T_{B}(\omega)>0$ such
that
$$\varphi(t,\theta_{-t}\omega)B(\theta_{-t}\omega)\subset
\mathcal{K}(\omega) \ \ \mbox{for all} \ \ t\geq T_{B}(\omega).$$
\end{lemma}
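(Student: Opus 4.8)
The plan is to feed the a priori bound \eqref{auxi est} with $\omega$ replaced by $\theta_{-t}\omega$ and let $t\to\infty$: the temperedness relations \eqref{temperness} will annihilate the term carrying the initial data and turn the forcing term into a finite random variable, whose square will serve as the radius of the absorbing ball.

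Concretely, I would fix $\omega$ in the full-measure set on which \eqref{temperness} holds, fix $B\in\mathcal D(\ell^2)$, take $v_0\in B(\theta_{-t}\omega)$, and replace $\omega$ by $\theta_{-t}\omega$ in \eqref{auxi est}. Using $\theta_s\theta_{-t}\omega=\theta_{s-t}\omega$ together with the substitutions $\sigma=s-t$ and $\rho=r-t$, and discarding the nonnegative integral terms on the left-hand side, one gets
\begin{equation*}
\|\varphi(t,\theta_{-t}\omega)v_0\|^2\le e^{-\lambda t+2\alpha\int_{-t}^{0}z(\theta_\sigma\omega)\,d\sigma}\|v_0\|^2+c_0\int_{-t}^{0}e^{\lambda\sigma-2\alpha z(\theta_\sigma\omega)+2\alpha\int_{\sigma}^{0}z(\theta_\rho\omega)\,d\rho}\,d\sigma,
\end{equation*}
where $c_0=\frac{8\|g\|^2}{\lambda}+2\|a\|_1$ and $\varphi(t,\theta_{-t}\omega)v_0=v(t,\theta_{-t}\omega,v_0)$ by Theorem \ref{RDS}.

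I would then handle the two terms separately. For the first, the second limit in \eqref{temperness} (at $-\infty$) gives $\frac1t\int_{-t}^{0}z(\theta_\sigma\omega)\,d\sigma\to0$, hence $e^{-\lambda t+2\alpha\int_{-t}^{0}z(\theta_\sigma\omega)\,d\sigma}\le e^{-\lambda t/2}$ for $t$ large; since $B$ is tempered, $\|v_0\|^2\le d(B(\theta_{-t}\omega))^2$ grows subexponentially, so the first term tends to $0$ as $t\to\infty$ uniformly over $v_0\in B(\theta_{-t}\omega)$. For the second, the integrand no longer depends on $t$ and the integral increases with $t$; by \eqref{temperness} the exponent $\lambda\sigma-2\alpha z(\theta_\sigma\omega)+2\alpha\int_\sigma^{0}z(\theta_\rho\omega)\,d\rho$ is $\le\tfrac\lambda2\sigma$ once $-\sigma$ is large, so
\begin{equation*}
r_1(\omega)^2:=c_0\int_{-\infty}^{0}e^{\lambda\sigma-2\alpha z(\theta_\sigma\omega)+2\alpha\int_{\sigma}^{0}z(\theta_\rho\omega)\,d\rho}\,d\sigma<\infty\quad\text{for a.e. }\omega .
\end{equation*}
Thus there is $T_B(\omega)>0$ with $\|\varphi(t,\theta_{-t}\omega)v_0\|^2\le 1+r_1(\omega)^2$ for all $t\ge T_B(\omega)$ and all $v_0\in B(\theta_{-t}\omega)$, and I would take
\begin{equation*}
\mathcal K(\omega):=\{v\in\ell^2:\ \|v\|^2\le 1+r_1(\omega)^2\},
\end{equation*}
a closed ball which, by the above, absorbs every $B\in\mathcal D(\ell^2)$.

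It remains to check that $\mathcal K$ is a closed random set lying in $\mathcal D(\ell^2)$. Measurability of $\omega\mapsto r_1(\omega)$ follows from that of $\omega\mapsto z(\theta_\cdot\omega)$ together with monotone convergence, so $\mathcal K$ is a closed random set. The one nonroutine point, which I expect to be the main obstacle, is temperedness of $\mathcal K$: for every $\gamma>0$ one must show $e^{-\gamma s}\sqrt{1+r_1(\theta_{-s}\omega)^2}\to0$ as $s\to+\infty$. Writing $r_1(\theta_{-s}\omega)^2$ out explicitly and bounding its exponent via the sublinear growth of $z$ from \eqref{temperness}, but now choosing the slack parameter $\delta$ small in terms of $\gamma$, yields $r_1(\theta_{-s}\omega)^2\le\frac{2c_0}{\lambda}e^{C\delta s}$ for $s$ large with an absolute constant $C$, whence $e^{-2\gamma s}r_1(\theta_{-s}\omega)^2\to0$ and $\mathcal K\in\mathcal D(\ell^2)$. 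Everything else in the argument is substitution and elementary estimates.
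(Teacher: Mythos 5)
Your proposal is correct and follows essentially the same route as the paper: substitute $\theta_{-t}\omega$ into the a priori estimate \eqref{auxi est}, shift the integration variable to $(-t,0]$, extend to $(-\infty,0]$ using \eqref{temperness} to get a finite random radius, kill the initial-data term via temperedness of $B$, and verify temperedness of the resulting closed ball by the same exponent-bounding argument with a small slack parameter. The only difference is that you spell out the convergence and temperedness estimates that the paper merely asserts follow from \eqref{temperness}.
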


\begin{proof}
Let us start with $v(t)=\varphi(t, \omega, v_0)$. Now, by replacing
$\omega$ with $\theta_{-t}\omega$ in \eqref{auxi est}, we obtain
\begin{eqnarray}
\begin{split}
\|&\varphi(t, \theta_{-t}\omega, v_0)
\|^2+\lambda\int_0^te^{-\lambda
s+2\alpha\int_0^sz(\theta_{r-t}\omega)dr}
\|v(s, \theta_{-t}\omega, v_0)\|^{2}ds\\
&~~\quad+2\lambda\int_0^te^{\alpha
(p-1)z(\theta_{s-t}\omega)-\lambda
s+2\alpha\int_0^sz(\theta_{r-t}\omega)dr}
\|v(s, \theta_{-t}\omega, v_0)\|_{p+1}^{p+1}ds\\
&\leq e^{-\lambda t+2\alpha\int_0^t
z(\theta_{s-t}\omega)ds}\|v_0\|^2\\
&~~\quad+(\frac{8\|g\|^2}{\lambda}+2\|a\|_1) \int_0^te^{-2\alpha
z(\theta_{s-t}\omega)+\lambda (s-t)
+2\alpha\int_s^tz(\theta_{r-t}\omega)dr}ds\\
&\leq e^{-\lambda t+2\alpha\int_{-t}^0
z(\theta_{s}\omega)ds}\|v_0\|^2\\
&~~\quad\quad+(\frac{8\|g\|^2}{\lambda}+2\|a\|_1)
\int_{-t}^0e^{-2\alpha z(\theta_{s}\omega)+\lambda s
+2\alpha\int_s^0z(\theta_{r}\omega)dr}ds\\
&\leq e^{-\lambda t+2\alpha\int_{-t}^0
z(\theta_{s}\omega)ds}\|v_0\|^2\\
&~~\quad\quad\quad+(\frac{8\|g\|^2}{\lambda}+2\|a\|_1)
\int_{-\infty}^0e^{-2\alpha z(\theta_{s}\omega)+\lambda s
+2\alpha\int_s^0z(\theta_{r}\omega)dr}ds.\label{4.1}
\end{split}
\end{eqnarray}
Due to \eqref{temperness}, we know that $
\int_{-\infty}^0e^{-2\alpha z(\theta_{s}\omega)+\lambda s
+2\alpha\int_s^0z(\theta_{r}\omega)dr}ds<+\infty.$ Considering for
any $v_0\in B(\theta_{-t}\omega)$, we have
\begin{eqnarray*}
\begin{split}
\|\varphi(t, \theta_{-t}\omega, v_0)\|^2\leq &e^{-\lambda
t+2\int_{-t}^0 z(\theta_{s}\omega)ds}
d(B(\theta_{-t}\omega))^2\\
&~~\quad+(\frac{8\|g\|^2}{\lambda}+2\|a\|_1)
\int_{-\infty}^0e^{-2\alpha z(\theta_{s}\omega)+\lambda s
+2\alpha\int_s^0z(\theta_{r}\omega)dr}ds.
\end{split}
\end{eqnarray*}
Denoting
\begin{eqnarray}
\begin{split}
R^2(\omega)=&1+(\frac{8\|g\|^2}{\lambda}+2\|a\|_1)
\int_{-\infty}^0e^{-2\alpha z(\theta_{s}\omega)+\lambda s
+2\alpha\int_s^0z(\theta_{r}\omega)dr}ds \label{4.2}
\end{split}
\end{eqnarray}
and noticing
\begin{eqnarray}\label{zero}
\lim_{t\rightarrow+\infty}e^{-\lambda t+2\alpha\int_{-t}^0
z(\theta_{s}\omega)ds}d(B(\theta_{-t}\omega))^2=0,
\end{eqnarray}
we conclude that
\begin{eqnarray}
\mathcal{K}(\omega)=\overline{B_{\ell^2}(0, R(\omega))} \label{4.3}
\end{eqnarray}
is an absorbing closed random set. It remains to show that
$\mathcal{K}(\omega)\in \mathcal{D}(\ell^2)$. Indeed, from
Definition \ref{tempered random set} (iv), for all $\gamma>0$, we
get
\begin{eqnarray*}
\begin{split}
&e^{-\gamma t}R^2(\theta_{-t}\omega)\\
&\quad=e^{-\gamma t}+(\frac{8\|g\|^2}{\lambda}+2\|a\|_1)e^{-\gamma
t} \int_{-\infty}^0e^{-2\alpha z(\theta_{s-t}\omega)+\lambda s
+2\alpha\int_s^0z(\theta_{r-t}\omega)dr}ds\\
&\quad\quad=e^{-\gamma
t}+(\frac{8\|g\|^2}{\lambda}+2\|a\|_1)e^{-\gamma t}
\int_{-\infty}^{-t}e^{-2\alpha z(\theta_{s}\omega)+\lambda (s+t)
+2\alpha\int_s^{-t}z(\theta_{r}\omega)dr}ds\rightarrow 0\\
&\quad\quad\quad\quad \mbox{as} \ \ t\rightarrow\infty.
\end{split}
\end{eqnarray*}
Thus, the proof is complete.
\end{proof}

\subsection{Random asymptotic nullity}
In this subsection, the property of random asymptotically null for
the solution $\varphi$ of system \eqref{sys4} will be established.
\begin{lemma} \label{Asymptotic Null}
Let $v_0\in \mathcal{K}(\omega)$ be the absorbing set given by
\eqref{4.3}. Then for every $\epsilon>0$, there exist
$\tilde{T}(\epsilon, \omega, \mathcal{K}(\omega))>0$ and
$\tilde{N}(\epsilon, \omega, \mathcal{K}(\omega))\ge1$, such that
the solution $\varphi$ of problem \eqref{sys4} is random
asymptotically null, that is
\begin{eqnarray*}\sup_{v\in \mathcal{K}(\omega)}\sum_{|i|>\tilde{N}
(\epsilon, \omega, \mathcal{K}(\omega))}| \varphi_i(t,
\theta_{-t}\omega, v(\theta_{-t}\omega)|^2\leq
 \epsilon^2, \ \ \forall
t\geq \tilde{T}(\epsilon, \omega, \mathcal{K}(\omega)).
\end{eqnarray*}
\end{lemma}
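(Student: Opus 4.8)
The plan is to carry out the standard ``uniform smallness of the tails'' estimate for lattice systems. Fix a $C^1$ cut‑off $\psi\colon\mathbb{R}^+\to[0,1]$ with $\psi(s)=0$ for $0\le s\le 1$, $\psi(s)=1$ for $s\ge 2$ and $|\psi'(s)|\le C_0$, and for $N\in\mathbb{N}$ set $\psi_i=\psi(|i|/N)$, so that $\psi_i=0$ for $|i|\le N$ and $\psi_i=1$ for $|i|\ge 2N$. Taking the $\ell^2$ inner product of \eqref{sys4} with the sequence $(\psi_i v_i)_{i\in\mathbb{Z}}$ gives
\begin{align*}
\frac{d}{dt}\sum_{i\in\mathbb{Z}}\psi_i v_i^2
={}&-2e^{-\alpha z(\theta_t\omega)}\sum_{i\in\mathbb{Z}}\psi_i v_i\,\bigl(A(\Phi(e^{\alpha z(\theta_t\omega)}v))\bigr)_i
+2\bigl(\alpha z(\theta_t\omega)-\lambda\bigr)\sum_{i\in\mathbb{Z}}\psi_i v_i^2\\
&-2\lambda e^{\alpha(p-1)z(\theta_t\omega)}\sum_{i\in\mathbb{Z}}\psi_i|v_i|^{p+1}
+2e^{-\alpha z(\theta_t\omega)}\sum_{i\in\mathbb{Z}}\psi_i v_i g_i,
\end{align*}
the third sum being $\le 0$ and henceforth discarded.

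For the first sum I would use the factorization $A=\mathbf{B}^*\mathbf{B}$ from Section \ref{s3} together with the discrete summation by parts $(\mathbf{B}^*b,a)=(b,\mathbf{B}a)$; with $w=\Phi(e^{\alpha z(\theta_t\omega)}v)$,
\[
\sum_{i\in\mathbb{Z}}\psi_i v_i\,(Aw)_i=\sum_{i\in\mathbb{Z}}\bigl(\mathbf{B}(\psi v)\bigr)_i(\mathbf{B}w)_i,\qquad
\bigl(\mathbf{B}(\psi v)\bigr)_i=\psi_i(v_{i+1}-v_i)+(\psi_{i+1}-\psi_i)v_{i+1}.
\]
The first resulting sum, $\sum_i\psi_i(v_{i+1}-v_i)(w_{i+1}-w_i)$, equals $e^{-\alpha z(\theta_t\omega)}\sum_i\psi_i(e^{\alpha z}v_{i+1}-e^{\alpha z}v_i)(\Phi(e^{\alpha z}v_{i+1})-\Phi(e^{\alpha z}v_i))$, so by the monotonicity \eqref{cond1} and $\psi_i\ge0$ it is $\ge-e^{-\alpha z(\theta_t\omega)}\sum_{|i|>N}a_i$ (its genuinely dissipative part $k e^{\alpha(p-1)z}\sum_i\psi_i|v_{i+1}-v_i|^{p+1}\ge0$ is dropped). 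The second sum is supported on $N-1\le|i|\le 2N$, where $|\psi_{i+1}-\psi_i|\le C_0/N$; estimating $|w_j|\le c_1\bigl(e^{\alpha z}|v_j|+\frac1p e^{p\alpha z}|v_j|^p\bigr)$ from \eqref{cond0} and applying Young's inequality, it is bounded by $\frac{C(\omega)}{N}\bigl(\|v\|^2+e^{\alpha(p-1)z(\theta_t\omega)}\|v\|_{p+1}^{p+1}\bigr)$, with the two $z$‑weights matching exactly those already present in \eqref{auxi est}. Finally $2e^{-\alpha z}\sum_i\psi_i v_i g_i\le\lambda\sum_i\psi_i v_i^2+\frac1\lambda e^{-2\alpha z}\sum_{|i|>N}g_i^2$. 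Collecting these and writing $y(t)=\sum_i\psi_i v_i(t)^2$, I arrive at
\[
\frac{d}{dt}\sum_{i\in\mathbb{Z}}\psi_i v_i^2+\bigl(\lambda-2\alpha z(\theta_t\omega)\bigr)\sum_{i\in\mathbb{Z}}\psi_i v_i^2\le\Theta_N(\theta_t\omega),
\]
where $\Theta_N(\theta_t\omega)$ gathers $2e^{-2\alpha z}\sum_{|i|>N}a_i$, $\frac1\lambda e^{-2\alpha z}\sum_{|i|>N}g_i^2$ and the $\frac{C(\omega)}{N}$‑term above.

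Applying Gronwall's lemma on $[0,t]$ and then replacing $\omega$ by $\theta_{-t}\omega$, exactly as in the passage from \eqref{auxi est} to \eqref{4.1}, and using $y(0)\le\|v_0\|^2\le R^2(\theta_{-t}\omega)$ for $v_0\in\mathcal{K}(\theta_{-t}\omega)$, I get
\[
\sum_{i\in\mathbb{Z}}\psi_i v_i(t)^2\le e^{-\lambda t+2\alpha\int_{-t}^0 z(\theta_s\omega)\,ds}R^2(\theta_{-t}\omega)+\int_{-t}^0 e^{\lambda s+2\alpha\int_s^0 z(\theta_r\omega)\,dr}\,\Theta_N(\theta_s\omega)\,ds.
\]
The first term tends to $0$ as $t\to\infty$ by \eqref{zero} and the temperedness of $R^2$ shown in Lemma \ref{Absorbing}; in the second term \eqref{temperness} yields convergence of the integral, while the a priori bounds in \eqref{auxi est} (after the same pullback) show that the $\|v\|^2$‑ and $\|v\|_{p+1}^{p+1}$‑contributions of $\Theta_N$ are bounded uniformly in $t$ and in $v_0\in\mathcal{K}$, so the whole integral is $\le\epsilon^2/2$ once $N$ is large (since $\sum_{|i|>N}a_i\to0$, $\sum_{|i|>N}g_i^2\to0$ and the remaining part carries a factor $1/N$). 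Fixing $\tilde N=2N$ with such an $N$ and then choosing $\tilde T$ so large that the first term is $\le\epsilon^2/2$ for $t\ge\tilde T$ gives $\sum_{|i|>\tilde N}|\varphi_i(t,\theta_{-t}\omega,v_0)|^2\le y(t)\le\epsilon^2$, which is the claim. The main obstacle is the commutator sum $\sum_i(\psi_{i+1}-\psi_i)v_{i+1}(w_{i+1}-w_i)$: because $\Phi$ grows like $|u|^p$, controlling it forces one to carry the $\ell^{p+1}$‑norm of $v$ and to absorb it using precisely the weighted space–time bound already contained in \eqref{auxi est}, in combination with the temperedness \eqref{temperness} of the Ornstein--Uhlenbeck process, which is what keeps the resulting time integral finite and drives it to $0$ as $N\to\infty$.
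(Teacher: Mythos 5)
Your proposal is correct and follows essentially the same route as the paper: the same cut-off function, the same summation-by-parts decomposition of $(A(\Phi(e^{\alpha z}v)),\psi v)$ into a monotone part handled by \eqref{cond1} and a commutator part of size $O(1/N)$ handled by \eqref{cond0}, followed by Gronwall, pullback, and the a priori bounds from \eqref{auxi est}--\eqref{4.1} to control the weighted time integrals of $\|v\|^2$ and $\|v\|_{p+1}^{p+1}$. No substantive differences.
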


\begin{proof}
Choose a smooth cut-off function satisfying $0\leq \rho(s)\leq 1$
for $s\in \mathbb{R^{+}}$ and $\rho(s)=0$ for $0\leq s\leq 1$,
$\rho(s)=1$ for $s\geq 2$. Suppose there exists a constant $c_0$
such that $|\rho'(s)|\leq c_0$ for $s\in \mathbb{R}^+$.

Let $N$ be a fixed integer which will be specified later, and set
$x=(\rho\left(\frac{|i|}{N}\right)\varphi_{i})_{i \in \mathbb{Z}}$.
Then taking the inner product of \eqref{sys4} with $x$ in $\ell^2$,
we obtain
\begin{eqnarray}
\begin{split}
\frac{d}{dt}\sum_{i\in \mathbb{Z}}&\rho\left(\frac{|i|}{N}\right)|
\varphi_{i}|^2+2\lambda e^{\alpha(p-1)z(\theta_t\omega)} \sum_{i\in
\mathbb{Z}} \rho(\frac{|i|}{N})|\varphi_{i}|^{p+1}\\
&=-2(e^{-\alpha z(\theta_t\omega)}(A(\Phi (e^{\alpha
z(\theta_t\omega)}\varphi)), x)\\
&\quad-2(\lambda-\alpha z(\theta_{t}\omega))\sum_{i\in \mathbb{Z}}
\rho\left(\frac{|i|}{N}\right)|\varphi_{i}|^2\\
&\quad\quad+2e^{-\alpha z(\theta_{t}\omega)}\sum_{i\in \mathbb{Z}}
\rho\left(\frac{|i|}{N}\right)g_i\varphi_i. \label{4.4}
\end{split}
\end{eqnarray}
We now estimate terms in \eqref{4.4} one by one. First, we have
\begin{eqnarray*}
\begin{split}
(&e^{-\alpha z(\theta_t\omega)}(A(\Phi (e^{\alpha
z(\theta_t\omega)}\varphi)), x)=e^{-\alpha z(\theta_t\omega)}(B(\Phi
(e^{\alpha
z(\theta_t\omega)}\varphi)), Bx)\\
&\quad=e^{-\alpha z(\theta_t\omega)}\sum_{i\in \mathbb{Z}}(\Phi
(e^{\alpha z(\theta_t\omega)}\varphi_{i+1})-\Phi (e^{\alpha
z(\theta_t\omega)}\varphi_i))\\
&\quad\quad\quad\cdot (\rho\left(\frac{|i+1|}{N}\right)
\varphi_{i+1}-\rho\left(\frac{|i|}{N}\right)\varphi_{i})\\
&\quad\quad=e^{-\alpha z(\theta_t\omega)}\sum_{i\in \mathbb{Z}}(\Phi
(e^{\alpha z(\theta_t\omega)}\varphi_{i+1})-\Phi (e^{\alpha
z(\theta_t\omega)}\varphi_i))\\
&\quad\quad\quad\quad\cdot[(\rho\left(\frac{|i+1|}{N}\right)
-\rho\left(\frac{|i|}{N}\right))\varphi_{i+1}
+\rho\left(\frac{|i|}{N}\right)(\varphi_{i+1}-\varphi_{i})].
\end{split}
\end{eqnarray*}
By \eqref{cond0} and \eqref{cond1}, we obtain
\begin{eqnarray}\label{est12}
\begin{split}
&e^{-\alpha z(\theta_t\omega)}\sum_{i\in
\mathbb{Z}}\rho\left(\frac{|i|}{N}\right)(\Phi (e^{\alpha
z(\theta_t\omega)}\varphi_{i+1})-\Phi (e^{\alpha
z(\theta_t\omega)}\varphi_i))
(\varphi_{i+1}-\varphi_{i})\\
&\quad\ge ke^{\alpha (p-1) z(\theta_t\omega)}\sum_{i\in
\mathbb{Z}}\rho\left(\frac{|i|}{N}\right)
|\varphi_{i+1}-\varphi_{i}|^{p+1} -e^{-2\alpha z(\theta_t\omega)}
\sum_{i\in \mathbb{Z}}\rho\left(\frac{|i|}{N}\right)a_i\\
&\quad\quad\ge -e^{-2\alpha z(\theta_t\omega)}\sum_{|i|\ge N}a_i
\end{split}
\end{eqnarray}
and
\begin{eqnarray}\label{est13}
\begin{split}
&|\sum_{i\in \mathbb{Z}}(\rho\left(\frac{|i+1|}{N}\right)
-\rho\left(\frac{|i|}{N}\right))(\Phi (e^{\alpha
z(\theta_t\omega)}\varphi_{i+1})-\Phi (e^{\alpha
z(\theta_t\omega)}\varphi_i)) \varphi_{i+1}|\\
&\quad\le \frac{c_0}{N}\sum_{i\in \mathbb{Z}}|\Phi (e^{\alpha
z(\theta_t\omega)}\varphi_{i+1})-\Phi (e^{\alpha
z(\theta_t\omega)}\varphi_i)||\varphi_{i+1}|\\
&\quad\quad\le \frac{2c_0c_1}{N}(e^{\alpha
z(\theta_t\omega)}\|\varphi\|^2+e^{\alpha p
z(\theta_t\omega)}\|\varphi\|_{p+1}^{p+1}).
\end{split}
\end{eqnarray}
For the last term in \eqref{4.4},
\begin{eqnarray}
\begin{split}
\sum_{i\in \mathbb{Z}}\rho\left(\frac{|i|}{N}\right)g_i\varphi_i
\leq \frac{\lambda}{2}\sum_{|i|\geq
N}\rho\left(\frac{|i|}{N}\right)|
\varphi_i|^2+\frac{1}{2\lambda}\sum_{|i|\geq N}|g_i|^2. \label{4.6}
\end{split}
\end{eqnarray}
Combining with \eqref{est12}, \eqref{est13} and \eqref{4.6}, we get
\begin{eqnarray*}
&&\frac{d}{dt}\sum_{i\in \mathbb{Z}}\rho\left(\frac{|i|}{N}\right)|
\varphi_{i}|^2+(\lambda-2\alpha z(\theta_{t}\omega))
\sum_{i\in \mathbb{Z}}\rho\left(\frac{|i|}{N}\right)|\varphi_{i}|^2\\
&&\quad\leq \frac{4c_0c_1}{N}(\|\varphi\|^2+e^{\alpha(p-1)
z(\theta_t\omega)}\|\varphi\|_{p+1}^{p+1})+\sum_{|i|\ge N}(2|a_i|
+\frac{1}{\lambda}|g_i|^2)e^{-2\alpha z(\theta_{t}\omega)}.
\end{eqnarray*}
By using Gronwall's inequality for $t>0$ and substituting
$\theta_{-t}\omega$ for $\omega$, it follows that
\begin{eqnarray}\label{4.7}
\begin{split}
&\sum_{i\in \mathbb{Z}}\rho\left(\frac{|i|}{N}\right)|\varphi_{i}
(t,
\theta_{-t}\omega, v_0(\theta_{-t}\omega))|^2\\
\leq&  e^{-\lambda t+2\alpha\int_{0}^t
z(\theta_{s-t}\omega)ds}\|v_0(\theta_{-t}\omega))\|^2\\
&+\frac{4c_0c_1}{N}\int_0^te^{-\lambda
s+2\alpha\int_0^sz(\theta_{r-t}\omega)dr}
\|\varphi(s, \theta_{-t}\omega, v_0)\|^{2}ds\\
&\quad+\frac{4c_0c_1}{N}\int_0^te^{\alpha
(p-1)z(\theta_{s-t}\omega)-\lambda
s+2\alpha\int_0^sz(\theta_{r-t}\omega)dr}
\|\varphi(s, \theta_{-t}\omega, v_0)\|_{p+1}^{p+1}ds\\
&\quad\quad+\sum_{|i|\ge N}(2|a_i| +\frac{1}{\lambda}|g_i|^2)
\int_{0}^t e^{-\lambda(t-s)+2\alpha\int_{s}^t
z(\theta_{r-t}\omega)dr-2\alpha z(\theta_{s-t}\omega)}ds.
\end{split}
\end{eqnarray}
By Lemma \ref{Absorbing} and \eqref{4.1}, there exists
$T_1=T_1(\epsilon, \omega, \mathcal{K}(\omega))>0$ such that for
$t\ge T_1$,
\begin{eqnarray*}
\begin{split}
\frac{4c_0c_1}{N}\int_0^te^{-\lambda
s+2\alpha\int_0^sz(\theta_{r-t}\omega)dr} \|\varphi(s,
\theta_{-t}\omega, v_0)\|^{2}ds \le \frac{4c_0c_1}{\lambda
N}R^2(\omega)
\end{split}
\end{eqnarray*}
and
\begin{eqnarray*}
\begin{split}
\frac{4c_0c_1}{N}\int_0^t&e^{\alpha
(p-1)z(\theta_{s-t}\omega)-\lambda
s+2\alpha\int_0^sz(\theta_{r-t}\omega)dr} \|\varphi(s,
\theta_{-t}\omega, v_0)\|_{p+1}^{p+1}ds\\
&\quad\quad\le \frac{2c_0c_1}{\lambda N}R^2(\omega),
\end{split}
\end{eqnarray*}
where $R(\omega)$ is given by \eqref{4.2}. Since $a\in \ell^1$ and
$g\in \ell^2$, by using \eqref{temperness}, there exist
$\tilde{T}(\epsilon, \omega, \mathcal{K}(\omega))>T_1$ and
$\tilde{N}(\epsilon, \omega, \mathcal{K}(\omega))\ge1$ such that for
$t>\tilde{T}(\epsilon, \omega, \mathcal{K}(\omega))$,
\begin{eqnarray*}
\begin{split}
\sum_{|i|>\tilde{N}(\epsilon, \omega, \mathcal{K}(\omega))}
|\varphi_i(t, \theta_{-t}\omega, v_0(\theta_{-t}\omega)|^2&\leq
\sum_{i\in \mathbb{Z}}\rho\left(\frac{|i|}{N}\right)|\varphi_{i} (t,
\theta_{-t}\omega, v_0(\theta_{-t}\omega))|^2\\
&\quad\quad\le \epsilon^2.
\end{split}
\end{eqnarray*}
The proof is complete.

\end{proof}

Thus, we have proved Theorem \ref{random attra.}.

\section*{Acknowledgements}
The authors would like to thank the former anonymous referees and
the editors of AMC for their helpful comments and suggestions which
largely improve the presentment of the manuscript.

\vskip 0.5cm

\end{document}